\newcommand{\nocomma}{}
\newtheorem{example}{Example}
\newtheorem{lemma}{Lemma}
\newenvironment{remark}{\noindent\textbf{Remark.}}{\medskip}
\newtheorem{theorem}{Theorem}
\newenvironment{proof}{\noindent\textit{Proof.}}{\hspace*{\fill}$\Box$\medskip}
\newenvironment{proof*}[1]{\noindent\textbf{#1\ }}{\hspace*{\fill}$\Box$\medskip}
\let\pdfoutput=\undefined\fi
\chardef\@x10\chardef\@xv60
\def\tcitime{
\def\@time{%
  \@minute\time\@hour\@minute\divide\@hour\@xv
  \ifnum\@hour<\@x 0\fi\the\@hour:%
  \multiply\@hour\@xv\advance\@minute-\@hour
  \ifnum\@minute<\@x 0\fi\the\@minute
  }}%
\def\x@hyperref#1#2#3{%
   \catcode`\~ = 12
   \catcode`\$ = 12
   \catcode`\_ = 12
   \catcode`\# = 12
   \catcode`\& = 12
   \y@hyperref{#1}{#2}{#3}%
}
\def\y@hyperref#1#2#3#4{%
   #2\ref{#4}#3
   \catcode`\~ = 13
   \catcode`\$ = 3
   \catcode`\_ = 8
   \catcode`\# = 6
   \catcode`\& = 4
}
\def\QCTOpt[#1]#2{%
  \def\QCTOptB{#1}
  \def\QCTOptA{#2}
}
\def\QCTNOpt#1{%
  \def\QCTOptA{#1}
  \let\QCTOptB\empty
}
\def\Qct{%
  \@ifnextchar[{%
    \QCTOpt}{\QCTNOpt}
}
\def\QCBOpt[#1]#2{%
  \def\QCBOptB{#1}%
  \def\QCBOptA{#2}%
}
\def\QCBNOpt#1{%
  \def\QCBOptA{#1}%
  \let\QCBOptB\empty
}
\def\Qcb{%
  \@ifnextchar[{%
    \QCBOpt}{\QCBNOpt}%
}
\def\PrepCapArgs{%
  \ifx\QCBOptA\empty
    \ifx\QCTOptA\empty
      {}%
    \else
      \ifx\QCTOptB\empty
        {\QCTOptA}%
      \else
        [\QCTOptB]{\QCTOptA}%
      \fi
    \fi
  \else
    \ifx\QCBOptA\empty
      {}%
    \else
      \ifx\QCBOptB\empty
        {\QCBOptA}%
      \else
        [\QCBOptB]{\QCBOptA}%
      \fi
    \fi
  \fi
}
\def\GRAPHICSPS#1{%
 \ifcase\GRAPHICSTYPE
   \special{ps: #1}%
 \or
   \special{language "PS", include "#1"}%
 \fi
}%
\def\graffile#1#2#3#4{%
    \bgroup
	   \@inlabelfalse
       \leavevmode
       \@ifundefined{bbl@deactivate}{\def~{\string~}}{\activesoff}%
        \raise -#4 \BOXTHEFRAME{%
           \hbox to #2{\raise #3\hbox to #2{\null #1\hfil}}}%
    \egroup
}%
\def\draftbox#1#2#3#4{%
 \leavevmode\raise -#4 \hbox{%
  \frame{\rlap{\protect\tiny #1}\hbox to #2%
   {\vrule height#3 width\z@ depth\z@\hfil}%
  }%
 }%
}%
\let\nographics=\@msidraft
\newif\ifwasdraft
\def\GRAPHIC#1#2#3#4#5{%
   \ifnum\@msidraft=\@ne\draftbox{#2}{#3}{#4}{#5}%
   \else\graffile{#1}{#3}{#4}{#5}%
   \fi
}
\def\addtoLaTeXparams#1{%
    \edef\LaTeXparams{\LaTeXparams #1}}%
\newif\ifBoxFrame \BoxFramefalse
\newif\ifOverFrame \OverFramefalse
\newif\ifUnderFrame \UnderFramefalse
\def\BOXTHEFRAME#1{%
   \hbox{%
      \ifBoxFrame
         \frame{#1}%
      \else
         {#1}%
      \fi
   }%
}
\def\doFRAMEparams#1{\BoxFramefalse\OverFramefalse\UnderFramefalse\readFRAMEparams#1\end}%
\def\readFRAMEparams#1{%
 \ifx#1\end%
  \let\next=\relax
  \else
  \ifx#1i\dispkind=\z@\fi
  \ifx#1d\dispkind=\@ne\fi
  \ifx#1f\dispkind=\tw@\fi
  \ifx#1t\addtoLaTeXparams{t}\fi
  \ifx#1b\addtoLaTeXparams{b}\fi
  \ifx#1p\addtoLaTeXparams{p}\fi
  \ifx#1h\addtoLaTeXparams{h}\fi
  \ifx#1X\BoxFrametrue\fi
  \ifx#1O\OverFrametrue\fi
  \ifx#1U\UnderFrametrue\fi
  \ifx#1w
    \ifnum\@msidraft=1\wasdrafttrue\else\wasdraftfalse\fi
    \@msidraft=\@ne
  \fi
  \let\next=\readFRAMEparams
  \fi
 \next
 }%
\def\IFRAME#1#2#3#4#5#6{%
      \bgroup
      \let\QCTOptA\empty
      \let\QCTOptB\empty
      \let\QCBOptA\empty
      \let\QCBOptB\empty
      #6%
      \parindent=0pt
      \leftskip=0pt
      \rightskip=0pt
      \setbox0=\hbox{\QCBOptA}%
      \@tempdima=#1\relax
      \ifOverFrame
          \typeout{This is not implemented yet}%
          \show\HELP
      \else
         \ifdim\wd0>\@tempdima
            \advance\@tempdima by \@tempdima
            \ifdim\wd0 >\@tempdima
               \setbox1 =\vbox{%
                  \unskip\hbox to \@tempdima{\hfill\GRAPHIC{#5}{#4}{#1}{#2}{#3}\hfill}%
                  \unskip\hbox to \@tempdima{\parbox[b]{\@tempdima}{\QCBOptA}}%
               }%
               \wd1=\@tempdima
            \else
               \textwidth=\wd0
               \setbox1 =\vbox{%
                 \noindent\hbox to \wd0{\hfill\GRAPHIC{#5}{#4}{#1}{#2}{#3}\hfill}\\%
                 \noindent\hbox{\QCBOptA}%
               }%
               \wd1=\wd0
            \fi
         \else
            \ifdim\wd0>0pt
              \hsize=\@tempdima
              \setbox1=\vbox{%
                \unskip\GRAPHIC{#5}{#4}{#1}{#2}{0pt}%
                \break
                \unskip\hbox to \@tempdima{\hfill \QCBOptA\hfill}%
              }%
              \wd1=\@tempdima
           \else
              \hsize=\@tempdima
              \setbox1=\vbox{%
                \unskip\GRAPHIC{#5}{#4}{#1}{#2}{0pt}%
              }%
              \wd1=\@tempdima
           \fi
         \fi
         \@tempdimb=\ht1
         \advance\@tempdimb by -#2
         \advance\@tempdimb by #3
         \leavevmode
         \raise -\@tempdimb \hbox{\box1}%
      \fi
      \egroup%
}%
\def\DFRAME#1#2#3#4#5{%
  \vspace\topsep
  \hfil\break
  \bgroup
     \leftskip\@flushglue
	 \rightskip\@flushglue
	 \parindent\z@
	 \parfillskip\z@skip
     \let\QCTOptA\empty
     \let\QCTOptB\empty
     \let\QCBOptA\empty
     \let\QCBOptB\empty
	 \vbox\bgroup
        \ifOverFrame 
           #5\QCTOptA\par
        \fi
        \GRAPHIC{#4}{#3}{#1}{#2}{\z@}%
        \ifUnderFrame 
           \break#5\QCBOptA
        \fi
	 \egroup
  \egroup
  \vspace\topsep
  \break
}%
\def\FFRAME#1#2#3#4#5#6#7{%
  \@ifundefined{floatstyle}
    {
     \begin{figure}[#1]%
    }
    {
	 \ifx#1h
      \begin{figure}[H]%
	 \else
      \begin{figure}[#1]%
	 \fi
	}
  \let\QCTOptA\empty
  \let\QCTOptB\empty
  \let\QCBOptA\empty
  \let\QCBOptB\empty
  \ifOverFrame
    #4
    \ifx\QCTOptA\empty
    \else
      \ifx\QCTOptB\empty
        \caption{\QCTOptA}%
      \else
        \caption[\QCTOptB]{\QCTOptA}%
      \fi
    \fi
    \ifUnderFrame\else
      \label{#5}%
    \fi
  \else
    \UnderFrametrue%
  \fi
  \begin{center}\GRAPHIC{#7}{#6}{#2}{#3}{\z@}\end{center}%
  \ifUnderFrame
    #4
    \ifx\QCBOptA\empty
      \caption{}%
    \else
      \ifx\QCBOptB\empty
        \caption{\QCBOptA}%
      \else
        \caption[\QCBOptB]{\QCBOptA}%
      \fi
    \fi
    \label{#5}%
  \fi
  \end{figure}%
 }%
\def\makeactives{
  \catcode`\"=\active
  \catcode`\;=\active
  \catcode`\:=\active
  \catcode`\'=\active
  \catcode`\~=\active
}
   \gdef\activesoff{%
      \def"{\string"}%
      \def;{\string;}%
      \def:{\string:}%
      \def'{\string'}%
      \def~{\string~}%
    }
\def\FRAME#1#2#3#4#5#6#7#8{%
 \bgroup
 \ifnum\@msidraft=\@ne
   \wasdrafttrue
 \else
   \wasdraftfalse%
 \fi
 \def\LaTeXparams{}%
 \dispkind=\z@
 \def\LaTeXparams{}%
 \doFRAMEparams{#1}%
 \ifnum\dispkind=\z@\IFRAME{#2}{#3}{#4}{#7}{#8}{#5}\else
  \ifnum\dispkind=\@ne\DFRAME{#2}{#3}{#7}{#8}{#5}\else
   \ifnum\dispkind=\tw@
    \edef\@tempa{\noexpand\FFRAME{\LaTeXparams}}%
    \@tempa{#2}{#3}{#5}{#6}{#7}{#8}%
    \fi
   \fi
  \fi
  \ifwasdraft\@msidraft=1\else\@msidraft=0\fi{}%
  \egroup
 }%
\def\TEXUX#1{"texux"}
\long\def\QQQ#1#2{%
     \long\expandafter\def\csname#1\endcsname{#2}}%
\long\def\QQA#1#2{}%
\def\QTR#1#2{{\csname#1\endcsname {#2}}}%
\def\EXPAND#1[#2]#3{}%
\def\NOEXPAND#1[#2]#3{}%
\def\LaTeXparent#1{}%
\def\ChildStyles#1{}%
\def\ChildDefaults#1{}%
\def\QTagDef#1#2#3{}%
  \providecommand{\UNICODE}[2][]{\protect\rule{.1in}{.1in}}
  \providecommand{\U}[1]{\protect\rule{.1in}{.1in}}
\def\QQfnmark#1{\footnotemark}
 \def\abstract{%
  \if@twocolumn
   \section*{Abstract (Not appropriate in this style!)}%
   \else \small 
   \begin{center}{\bf Abstract\vspace{-.5em}\vspace{\z@}}\end{center}%
   \quotation 
   \fi
  }%
   \def\registered{\relax\ifmmode{}\r@gistered
                    \else$\m@th\r@gistered$\fi}%
 \def\r@gistered{^{\ooalign
  {\hfil\raise.07ex\hbox{$\scriptstyle\rm\text{R}$}\hfil\crcr
  \mathhexbox20D}}}}{}%
\newdimen\theight
\def\newfmtname{LaTeX2e}
  \DeclareOldFontCommand{\rm}{\normalfont\rmfamily}{\mathrm}
  \DeclareOldFontCommand{\sf}{\normalfont\sffamily}{\mathsf}
  \DeclareOldFontCommand{\tt}{\normalfont\ttfamily}{\mathtt}
  \DeclareOldFontCommand{\bf}{\normalfont\bfseries}{\mathbf}
  \DeclareOldFontCommand{\it}{\normalfont\itshape}{\mathit}
  \DeclareOldFontCommand{\sl}{\normalfont\slshape}{\@nomath\sl}
  \DeclareOldFontCommand{\sc}{\normalfont\scshape}{\@nomath\sc}
\def\alpha{{\Greekmath 010B}}%
\def\beta{{\Greekmath 010C}}%
\def\gamma{{\Greekmath 010D}}%
\def\delta{{\Greekmath 010E}}%
\def\epsilon{{\Greekmath 010F}}%
\def\zeta{{\Greekmath 0110}}%
\def\eta{{\Greekmath 0111}}%
\def\theta{{\Greekmath 0112}}%
\def\iota{{\Greekmath 0113}}%
\def\kappa{{\Greekmath 0114}}%
\def\lambda{{\Greekmath 0115}}%
\def\mu{{\Greekmath 0116}}%
\def\nu{{\Greekmath 0117}}%
\def\xi{{\Greekmath 0118}}%
\def\pi{{\Greekmath 0119}}%
\def\rho{{\Greekmath 011A}}%
\def\sigma{{\Greekmath 011B}}%
\def\tau{{\Greekmath 011C}}%
\def\upsilon{{\Greekmath 011D}}%
\def\phi{{\Greekmath 011E}}%
\def\chi{{\Greekmath 011F}}%
\def\psi{{\Greekmath 0120}}%
\def\omega{{\Greekmath 0121}}%
\def\varepsilon{{\Greekmath 0122}}%
\def\vartheta{{\Greekmath 0123}}%
\def\varpi{{\Greekmath 0124}}%
\def\varrho{{\Greekmath 0125}}%
\def\varsigma{{\Greekmath 0126}}%
\def\varphi{{\Greekmath 0127}}%
\def\nabla{{\Greekmath 0272}}
\def\FindBoldGroup{%
   {\setbox0=\hbox{$\mathbf{x\global\edef\theboldgroup{\the\mathgroup}}$}}%
}
\def\Greekmath#1#2#3#4{%
    \if@compatibility
        \ifnum\mathgroup=\symbold
           \mathchoice{\mbox{\boldmath$\displaystyle\mathchar"#1#2#3#4$}}%
                      {\mbox{\boldmath$\textstyle\mathchar"#1#2#3#4$}}%
                      {\mbox{\boldmath$\scriptstyle\mathchar"#1#2#3#4$}}%
                      {\mbox{\boldmath$\scriptscriptstyle\mathchar"#1#2#3#4$}}%
        \else
           \mathchar"#1#2#3#4%
        \fi 
    \else 
        \FindBoldGroup
        \ifnum\mathgroup=\theboldgroup 
           \mathchoice{\mbox{\boldmath$\displaystyle\mathchar"#1#2#3#4$}}%
                      {\mbox{\boldmath$\textstyle\mathchar"#1#2#3#4$}}%
                      {\mbox{\boldmath$\scriptstyle\mathchar"#1#2#3#4$}}%
                      {\mbox{\boldmath$\scriptscriptstyle\mathchar"#1#2#3#4$}}%
        \else
           \mathchar"#1#2#3#4%
        \fi     	    
	  \fi}
\newif\ifGreekBold  \GreekBoldfalse
\let\SAVEPBF=\pbf
\def\pbf{\GreekBoldtrue\SAVEPBF}%
  \newcounter{equationnumber}  
  \def\mathletters{%
     \addtocounter{equation}{1}
     \edef\@currentlabel{\theequation}%
     \setcounter{equationnumber}{\c@equation}
     \setcounter{equation}{0}%
     \edef\theequation{\@currentlabel\noexpand\alph{equation}}%
  }
    \def\BibTeX{{\rm B\kern-.05em{\sc i\kern-.025em b}\kern-.08em
                 T\kern-.1667em\lower.7ex\hbox{E}\kern-.125emX}}}{}%
\def\AmS{{\protect\usefont{OMS}{cmsy}{m}{n}%
                A\kern-.1667em\lower.5ex\hbox{M}\kern-.125emS}}}{}%
\def\@@eqncr{\let\@tempa\relax
    \ifcase\@eqcnt \def\@tempa{& & &}\or \def\@tempa{& &}%
      \else \def\@tempa{&}\fi
     \@tempa
     \if@eqnsw
        \iftag@
           \@taggnum
        \else
           \@eqnnum\stepcounter{equation}%
        \fi
     \fi
     \global\tag@false
     \global\@eqnswtrue
     \global\@eqcnt\z@\cr}
\def\TCItag{\@ifnextchar*{\@TCItagstar}{\@TCItag}}
\def\@TCItag#1{%
    \global\tag@true
    \global\def\@taggnum{(#1)}}
\def\@TCItagstar*#1{%
    \global\tag@true
    \global\def\@taggnum{#1}}
\def\ExitTCILatex{\makeatother }
\if@compatibility\message{amsmath already loaded}\fi\aftergroup\ExitTCILatex}
\if@compatibility\message{amstex already loaded}\fi\aftergroup\ExitTCILatex}
\if@compatibility\message{amsgen already loaded}\fi\aftergroup\ExitTCILatex}
\let\DOTSI\relax
\def\RIfM@{\relax\ifmmode}%
\def\FN@{\futurelet\next}%
\def\iint{\DOTSI\intno@\tw@\FN@\ints@}%
\def\iiint{\DOTSI\intno@\thr@@\FN@\ints@}%
\def\iiiint{\DOTSI\intno@4 \FN@\ints@}%
\def\idotsint{\DOTSI\intno@\z@\FN@\ints@}%
\def\ints@{\findlimits@\ints@@}%
\newif\iflimtoken@
\newif\iflimits@
\def\findlimits@{\limtoken@true\ifx\next\limits\limits@true
 \else\ifx\next\nolimits\limits@false\else
 \limtoken@false\ifx\ilimits@\nolimits\limits@false\else
 \ifinner\limits@false\else\limits@true\fi\fi\fi\fi}%
\def\multint@{\int\ifnum\intno@=\z@\intdots@                          
 \else\intkern@\fi                                                    
 \ifnum\intno@>\tw@\int\intkern@\fi                                   
 \ifnum\intno@>\thr@@\int\intkern@\fi                                 
 \int}
\def\multintlimits@{\intop\ifnum\intno@=\z@\intdots@\else\intkern@\fi
 \ifnum\intno@>\tw@\intop\intkern@\fi
 \ifnum\intno@>\thr@@\intop\intkern@\fi\intop}%
\def\intic@{%
    \mathchoice{\hskip.5em}{\hskip.4em}{\hskip.4em}{\hskip.4em}}%
\def\negintic@{\mathchoice
 {\hskip-.5em}{\hskip-.4em}{\hskip-.4em}{\hskip-.4em}}%
\def\ints@@{\iflimtoken@                                              
 \def\ints@@@{\iflimits@\negintic@
   \mathop{\intic@\multintlimits@}\limits                             
  \else\multint@\nolimits\fi                                          
  \eat@}
 \else                                                                
 \def\ints@@@{\iflimits@\negintic@
  \mathop{\intic@\multintlimits@}\limits\else
  \multint@\nolimits\fi}\fi\ints@@@}%
\def\intkern@{\mathchoice{\!\!\!}{\!\!}{\!\!}{\!\!}}%
\def\plaincdots@{\mathinner{\cdotp\cdotp\cdotp}}%
\def\intdots@{\mathchoice{\plaincdots@}%
 {{\cdotp}\mkern1.5mu{\cdotp}\mkern1.5mu{\cdotp}}%
 {{\cdotp}\mkern1mu{\cdotp}\mkern1mu{\cdotp}}%
 {{\cdotp}\mkern1mu{\cdotp}\mkern1mu{\cdotp}}}%
\def\RIfM@{\relax\protect\ifmmode}
\def\text{\RIfM@\expandafter\text@\else\expandafter\mbox\fi}
\let\nfss@text\text
\def\text@#1{\mathchoice
   {\textdef@\displaystyle\f@size{#1}}%
   {\textdef@\textstyle\tf@size{\firstchoice@false #1}}%
   {\textdef@\textstyle\sf@size{\firstchoice@false #1}}%
   {\textdef@\textstyle \ssf@size{\firstchoice@false #1}}%
   \glb@settings}
\def\textdef@#1#2#3{\hbox{{%
                    \everymath{#1}%
                    \let\f@size#2\selectfont
                    #3}}}
\newif\iffirstchoice@
\def\Let@{\relax\iffalse{\fi\let\\=\cr\iffalse}\fi}%
\def\vspace@{\def\vspace##1{\crcr\noalign{\vskip##1\relax}}}%
\def\multilimits@{\bgroup\vspace@\Let@
 \baselineskip\fontdimen10 \scriptfont\tw@
 \advance\baselineskip\fontdimen12 \scriptfont\tw@
 \lineskip\thr@@\fontdimen8 \scriptfont\thr@@
 \lineskiplimit\lineskip
 \vbox\bgroup\ialign\bgroup\hfil$\m@th\scriptstyle{##}$\hfil\crcr}%
\def\Sb{_\multilimits@}%
\def\endSb{\crcr\egroup\egroup\egroup}%
\def\Sp{^\multilimits@}%
\newdimen\ex@
\def\rightarrowfill@#1{$#1\m@th\mathord-\mkern-6mu\cleaders
 \hbox{$#1\mkern-2mu\mathord-\mkern-2mu$}\hfill
 \mkern-6mu\mathord\rightarrow$}%
\def\leftarrowfill@#1{$#1\m@th\mathord\leftarrow\mkern-6mu\cleaders
 \hbox{$#1\mkern-2mu\mathord-\mkern-2mu$}\hfill\mkern-6mu\mathord-$}%
\def\leftrightarrowfill@#1{$#1\m@th\mathord\leftarrow
\mkern-6mu\cleaders
 \hbox{$#1\mkern-2mu\mathord-\mkern-2mu$}\hfill
 \mkern-6mu\mathord\rightarrow$}%
\def\overrightarrow{\mathpalette\overrightarrow@}%
\def\overrightarrow@#1#2{\vbox{\ialign{##\crcr\rightarrowfill@#1\crcr
 \noalign{\kern-\ex@\nointerlineskip}$\m@th\hfil#1#2\hfil$\crcr}}}%
\def\overleftarrow{\mathpalette\overleftarrow@}%
\def\overleftarrow@#1#2{\vbox{\ialign{##\crcr\leftarrowfill@#1\crcr
 \noalign{\kern-\ex@\nointerlineskip}$\m@th\hfil#1#2\hfil$\crcr}}}%
\def\overleftrightarrow{\mathpalette\overleftrightarrow@}%
\def\overleftrightarrow@#1#2{\vbox{\ialign{##\crcr
   \leftrightarrowfill@#1\crcr
 \noalign{\kern-\ex@\nointerlineskip}$\m@th\hfil#1#2\hfil$\crcr}}}%
\def\underrightarrow{\mathpalette\underrightarrow@}%
\def\underrightarrow@#1#2{\vtop{\ialign{##\crcr$\m@th\hfil#1#2\hfil
  $\crcr\noalign{\nointerlineskip}\rightarrowfill@#1\crcr}}}%
\def\underleftarrow{\mathpalette\underleftarrow@}%
\def\underleftarrow@#1#2{\vtop{\ialign{##\crcr$\m@th\hfil#1#2\hfil
  $\crcr\noalign{\nointerlineskip}\leftarrowfill@#1\crcr}}}%
\def\underleftrightarrow{\mathpalette\underleftrightarrow@}%
\def\underleftrightarrow@#1#2{\vtop{\ialign{##\crcr$\m@th
  \hfil#1#2\hfil$\crcr
 \noalign{\nointerlineskip}\leftrightarrowfill@#1\crcr}}}%
\def\qopnamewl@#1{\mathop{\operator@font#1}\nlimits@}
\let\nlimits@\displaylimits
\def\setboxz@h{\setbox\z@\hbox}
\def\varlim@#1#2{\mathop{\vtop{\ialign{##\crcr
 \hfil$#1\m@th\operator@font lim$\hfil\crcr
 \noalign{\nointerlineskip}#2#1\crcr
 \noalign{\nointerlineskip\kern-\ex@}\crcr}}}}
 \def\rightarrowfill@#1{\m@th\setboxz@h{$#1-$}\ht\z@\z@
  $#1\copy\z@\mkern-6mu\cleaders
  \hbox{$#1\mkern-2mu\box\z@\mkern-2mu$}\hfill
  \mkern-6mu\mathord\rightarrow$}
\def\leftarrowfill@#1{\m@th\setboxz@h{$#1-$}\ht\z@\z@
  $#1\mathord\leftarrow\mkern-6mu\cleaders
  \hbox{$#1\mkern-2mu\copy\z@\mkern-2mu$}\hfill
  \mkern-6mu\box\z@$}
\def\projlim{\qopnamewl@{proj\,lim}}
\def\injlim{\qopnamewl@{inj\,lim}}
\def\varinjlim{\mathpalette\varlim@\rightarrowfill@}
\def\varprojlim{\mathpalette\varlim@\leftarrowfill@}
\def\varliminf{\mathpalette\varliminf@{}}
\def\varliminf@#1{\mathop{\underline{\vrule\@depth.2\ex@\@width\z@
   \hbox{$#1\m@th\operator@font lim$}}}}
\def\varlimsup{\mathpalette\varlimsup@{}}
\def\varlimsup@#1{\mathop{\overline
  {\hbox{$#1\m@th\operator@font lim$}}}}
\def\align{\@verbatim \frenchspacing\@vobeyspaces \@alignverbatim
You are using the "align" environment in a style in which it is not defined.}
\let\csname endalign*\endcsname =\endtrivlist
\def\alignat{\@verbatim \frenchspacing\@vobeyspaces \@alignatverbatim
You are using the "alignat" environment in a style in which it is not defined.}
\let\csname endalignat*\endcsname =\endtrivlist
\def\xalignat{\@verbatim \frenchspacing\@vobeyspaces \@xalignatverbatim
You are using the "xalignat" environment in a style in which it is not defined.}
\let\csname endxalignat*\endcsname =\endtrivlist
\def\gather{\@verbatim \frenchspacing\@vobeyspaces \@gatherverbatim
You are using the "gather" environment in a style in which it is not defined.}
\let\csname endgather*\endcsname =\endtrivlist
\def\multiline{\@verbatim \frenchspacing\@vobeyspaces \@multilineverbatim
You are using the "multiline" environment in a style in which it is not defined.}
\let\csname endmultiline*\endcsname =\endtrivlist
\def\arrax{\@verbatim \frenchspacing\@vobeyspaces \@arraxverbatim
You are using a type of "array" construct that is only allowed in AmS-LaTeX.}
\def\tabulax{\@verbatim \frenchspacing\@vobeyspaces \@tabulaxverbatim
You are using a type of "tabular" construct that is only allowed in AmS-LaTeX.}
\let\csname endarrax*\endcsname =\endtrivlist
\let\csname endtabulax*\endcsname =\endtrivlist
 \def\endequation{%
     \ifmmode\ifinner 
      \iftag@
        \addtocounter{equation}{-1} 
        $\hfil
           \displaywidth\linewidth\@taggnum\egroup \endtrivlist
        \global\tag@false
        \global\@ignoretrue   
      \else
        $\hfil
           \displaywidth\linewidth\@eqnnum\egroup \endtrivlist
        \global\tag@false
        \global\@ignoretrue 
      \fi
     \else   
      \iftag@
        \addtocounter{equation}{-1} 
        \eqno \hbox{\@taggnum}
        \global\tag@false%
        $$\global\@ignoretrue
      \else
        \eqno \hbox{\@eqnnum}
        $$\global\@ignoretrue
      \fi
     \fi\fi
 } 
 \newif\iftag@ \tag@false
 \def\TCItag{\@ifnextchar*{\@TCItagstar}{\@TCItag}}
 \def\@TCItag#1{%
     \global\tag@true
     \global\def\@taggnum{(#1)}}
 \def\@TCItagstar*#1{%
     \global\tag@true
     \global\def\@taggnum{#1}}
     \def\tag{\@ifnextchar*{\@tagstar}{\@tag}}
     \def\@tag#1{%
         \global\tag@true
         \global\def\@taggnum{(#1)}}
     \def\@tagstar*#1{%
         \global\tag@true
         \global\def\@taggnum{#1}}
\begin{document}

\title{\textsc{Some remarks on the\\
coincidence set\\
for the {S}ignorini problem}}

\author{{M}iguel de {B}enito {D}elgado \and {J}esus {I}ldefonso {D}íaz }
\maketitle

\begin{center}
\emph{From two generations of students \\[0pt]
to a fabulous teacher and friend now in his retirement: \\[0pt]
Baldomero Rubio}
\end{center}

\bigskip

\begin{abstract}
We study some properties of the coincidence set for the
\textit{boundary Signorini problem}, improving some results from
previous works by the second author and collaborators. Among other
new results, we show here that the convexity assumption on the
domain made previously in the literature on the location of
the coincidence set can be avoided under suitable alternative conditions
on the data.
\end{abstract}

\label{page:t} \thispagestyle{plain}

\bigskip
\bigskip
\noindent AMS Subject Classification: 35J86, 35R35, 35R70, 35B60

\medskip\noindent Keywords: Signorini problem, coincidence set, location estimates,
free boundary problem, contact problems

\newpage

\section{Introduction}
\label{sec:introduction}

In the classical Signorini problem of linear elasticity 
\cite{signorini_questioni_1959}, or \textit{boundary obstacle problem},
an isotropic, homogeneous and linearly hyperelastic material rests in
equilibrium over a rigid foundation. Because the contact
zone is an unknown of the problem, estimates on its location and size are of
interest in the study of the properties of solutions. In the scalar
Signorini problem displacements take place along one dimension only and the
equation of conservation of momentum is reduced to Poisson's equation. The 
simplified model we shall consider in this paper is the following:

\begin{equation*}
\left\{
\begin{array}{rl}
-\Delta u=f & \text{ in }\Omega , \\
\begin{array}{c}
u\geqslant \psi ,\text{ }\partial _{\nu }u\geqslant g \\
(u-\psi )(\partial _{\nu }u-g)=0%
\end{array}
& \text{ on }\Gamma .%
\end{array}%
\right.
\end{equation*}

Notice that, although the prototypical model for boundary obstacle problems
is the one in elasticity theory, other related models with similar boundary
conditions are found for instance in semipermeable membranes (the so called
\textit{parabolic Signorini problem}) or stochastic control (with
fractional powers of the Laplacian). See the last series of Remarks
at the end of the paper.

We recall that the weak mathematical formulation of the model (what we will
refer to as \textbf{Problem 1$\label{prob0}$}) is the following: given
an open, bounded set $\Omega \subset \mathbbm{R}^{N}$ with Lipschitz boundary $\Gamma
=\partial \Omega $ and functions $\psi \in H^{1/2}(\Gamma ), g \in H^{-1/2}(\Gamma)$
and $f\in L^{2}(\Omega )$, find $u \in K_{\psi }:=\left\{ v\in H^{1}(\Omega): v\geqslant
\psi \text{ on }\Gamma \right\} $ such that
\begin{equation}
a_{0}(u,v-u)\geqslant F(v-u)\text{ for all }v \in K_{\psi }
\label{eq:var-ineq0}
\end{equation}%
where
\begin{equation}
a_{0}(u,v):=\int_{\Omega }\nabla u\nabla v\mathrm{d}x\text{ and }%
F(v):=\int_{\Omega }fv\mathrm{d}x+\langle g,v\rangle _{H^{-1/2}(\Gamma)
\times H^{1/2}(\Gamma )}.  \label{defini F}
\end{equation}

Since the bilinear form is not coercive some additional conditions on the
data must be introduced. In particular, we must assume the compatibility
condition
\begin{equation}
\int_{\Omega }f\mathrm{d}x+\langle g,1\rangle _{H^{-1/2}(\Gamma )\times
H^{1/2}(\Gamma )} \leq 0.  \label{eq:compatibility-condition}
\end{equation}
Notice that (\ref{eq:compatibility-condition}) is the
one-dimensional equivalent of the general condition for vectorial
formulations of the problem considered initially by Fichera {\cite[p. 81]%
{Fichera:1970wf}}, although there it is given in the equivalent form:
\begin{equation*}
\int_{\Omega }\mathbf{f}\cdot \mathbf{r}\mathrm{d}x+\int_{\Gamma }\mathbf{g}%
\cdot \mathbf{r}\mathrm{d}s_{x} \leq 0
\end{equation*}
for every rigid and admissible displacement $\mathbf{r}$, with equality if
and only if $\mathbf{-r}$ is also admissible, i.e. the cone of displacements
moving the body away from the obstacle. Equivalently, condition (\ref%
{eq:compatibility-condition}) means that rigid displacements separating the
body from the obstacle increase the elastic energy.

Existence and uniqueness of solution of a general class of problems
including Problem 1 follow from {\cite[Theorem 5.1]{Lions:1967jv}}
which proves the result for general non-symmetric semicoercive
bilinear forms, with uniqueness up to a member of a given subset of
the rigid displacements $\mathbf{r}$ satisfying the condition
$F(\mathbf{r})=0$ (where $F$ was defined in (\ref{defini F})). In
our case, since the unknown is scalar we obtain uniqueness once the
compatibility condition is assumed.

The {\bfseries{coincidence set}} for a solution $u \in H^{1}(\Omega )$ is
defined as the complement of the open set $\left\{ x \in \Gamma :u(x)>\psi (x)%
\text{ in the sense of }H^{1/2}(\Gamma )\right\} $, i.e.
\begin{equation}
I_{\psi }:=\{x\in \Gamma :u(x)=\psi (x)\}.  \label{eqdef:coincidence-set}
\end{equation}

Observe that it is not justified to require {\itshape{a priori}}
$g\in H^{1/2}(\Omega )$ since $a_{0}$ is not coercive and the
solution may fail to be in $H^{2}(\Omega )$ in very simple cases.
See, e.g., {\cite[Theorem I.10]{Brezis:1972tt} and \cite[p.
617]{Kinderlehrer:1981rs}} for some classical counterexamples of
cases in which $u\not\in H^{2}(\Omega)$, as well as the results
presented in \cite{schatzmann_problemes_1973} and \cite{caffarelli_further_1979}.

A common recourse against the lack of coercivity of the bilinear form is to
replace the equation by a new one by introducing an additional regularizing
term $\alpha u$ with $\alpha >0$ which makes the proofs of existence and
uniqueness straightforward. This is done e.g. in {\cite[Theorem I.10]%
{Brezis:1972tt}}. In this case the corresponding problem (which we shall
refer to as \textbf{Problem $\mathbf{\alpha}$}) involves the PDE $-\Delta
u+\alpha u=f$ which leads to the bilinear form
\begin{equation*}
a_{\alpha }(u,v):=\int_{\Omega }\nabla u\nabla v\mathrm{d}x+\alpha
\int_{\Omega }uv\mathrm{d}x\text{.}
\end{equation*}

Here, the additional linear term $\alpha u$ makes $a_{\alpha }$ coercive
even in the case of no Dirichlet boundary conditions and allows for a standard
proof of existence and uniqueness applying Stampacchia's theorem,
{\cite[Theorem 2.1]{Lions:1967jv}}. Coercivity also allows the use of Brézis'
regularity result {\cite[Theorem I.10] {Brezis:1972tt}} stating
$u\in H^{2}(\Omega )$ whenever $f\in L^{2}(\Omega )$, making the choice
$g\in H^{1/2}(\Gamma )$ adequate. We also note that
under additional assumptions on the data, the solution is in $L^{\infty}(\Omega )$
(see, e.g. \cite[Theorem I.10]{Brezis:1972tt} and \cite[§5]{Kinderlehrer:2000vi}).
See also the monograph \cite{petrosyan_regularity_2012}.

Concerning the estimates on the spatial location of the coincidence set we
recall that after Friedman's pioneering paper \cite{Friedman:1967bn}, the
first explicit estimates on the location were given in {\cite%
{Diaz:1980lf,DiazDiaz:1988wp}} for Problem $\alpha$ with $g=\psi =0$%
, under the geometric requirement that $\Omega$ be convex and by assuming
that the external force be negative near a sufficiently large part of the
boundary $\Gamma$.

In Section 2 we extend the conclusion of the above mentioned papers to
Problem 1 (see Theorem \ref{th:coincidence-convex}) while also
relaxing their assumptions. Section \ref{sec:dispense} is devoted to the
study of location estimates of the coincidence set when the convexity
assumption on $\Omega $ is not made. We provide Example \ref{exa:ring}
in which the coincidence set is totally identified on a non-convex $\Omega$.
Finally, by working with a suitable change of coordinates, we prove the main
result of this paper (Theorem \ref{th:coincidence-general}) in which we obtain
some location estimates of the coincidence set without any geometrical assumption
on $\Omega$ but instead some regularity condition on $\partial \Omega $.

\section{Location estimates for Problem 1}
\label{sec:example}

As already mentioned, in {\cite[Theorem 2]{DiazDiaz:1988wp}} the basic
geometrical assumption made for the study of Problem $\alpha$ with $%
g=\psi =0$ is that the domain $\Omega$ must be convex. In this section we
first improve on the aforementioned result by considering Problem 1
(i.e., without any regularization term) for non necessarily vanishing data $%
g$ and $\psi $. Moreover we shall assume convexity only for parts of $\Omega
$ near the boundary on which a suitable balance between the external force,
the obstacle and the boundary flux becomes negative. We shall also require $%
C^{3} $ boundary in order to have a tubular neighborhood of $\partial \Omega
$ with a $C^{2}$ parametrization given by $x=x(\xi ,s)=\xi +s\nu (\xi )$.

\begin{theorem}
\label{th:coincidence-convex}Let $\Omega $ be an open set in $\mathbbm{R}^{N}$
and assume that the data $f,g$ and $\psi$ lead to a unique solution $u\in
H^{1}(\Omega )\cap L^{\infty }(\Omega )$ of Problem 1.
Assume that there exist $\epsilon, \delta \geq 0$, a subset $\Gamma_{\epsilon, \delta}
\subset \Gamma = \partial \Omega$ of class $C^{3}$, and a tubular semi-neighborhood $V_{\rho }^{-}$
of $\Gamma_{\epsilon, \delta }$ for some $\rho >0$ with
\begin{equation}
\rho \text{ large enough and }V_{\rho }^{-}\subset \overline{\Omega },
\label{large enough}
\end{equation}
such that if $\Psi \in H^{2}(V_{\rho }^{-})$ is a nonnegative extension
of $\psi $ to $V_{\rho }^{-}$ (i.e. $\Psi = \psi $ on
$\Gamma_{\epsilon ,\delta }$) then one has
\begin{equation*}
f+\Delta \Psi \leq -\epsilon \text{ on } V_{\rho}^{-},
\end{equation*}
and
\begin{equation}
g-\partial _{\nu }\Psi \leqslant -\delta \text{ \ on \ }\Gamma _{\epsilon
,\delta }.  \label{Balance negative}
\end{equation}
Then, if $\epsilon > 0$ and
\begin{equation}
\Omega \cap V_{\rho }^{-}\text{ is a convex set,}  \label{convexity}
\end{equation}
we have the location estimate $\Gamma _{\epsilon ,\delta }\subset
I_{\psi }$ on the coincidence set of $u$.
\end{theorem}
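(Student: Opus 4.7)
The strategy is an argument by contradiction based on strict subharmonicity of the auxiliary function $w := u - \Psi$ on $V_{\rho}^{-}\cap\Omega$ and a quadratic barrier built from the distance to $\Gamma_{\epsilon,\delta}$.

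I would first observe that, since $-\Delta u = f$ in $\Omega$ in the sense of distributions and since $f+\Delta\Psi\leqslant -\epsilon$ in $V_{\rho}^{-}$,
\[
\Delta w \;=\; -\Delta\Psi - f \;=\; -(f+\Delta\Psi) \;\geqslant\; \epsilon \;>\; 0 \qquad \text{in } V_{\rho}^{-}\cap\Omega,
\]
so $w$ is strictly subharmonic there. Moreover $w\geqslant 0$ on $\Gamma_{\epsilon,\delta}$ because $u\in K_{\psi}$ and $\Psi=\psi$ on $\Gamma_{\epsilon,\delta}$; and on the (relatively) open portion of $\Gamma_{\epsilon,\delta}$ where $u>\psi$, the complementarity in Problem 1 forces $\partial_{\nu}u=g$, hence $\partial_{\nu}w = g-\partial_{\nu}\Psi \leqslant -\delta \leqslant 0$ there.

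Using the tubular parametrization $x=\xi+s\nu(\xi)$, the function $\phi(x):=\operatorname{dist}(x,\Gamma_{\epsilon,\delta})=-s$ is $C^{2}$ on $V_{\rho}^{-}$ with $|\nabla\phi|=1$, and the convexity hypothesis (\ref{convexity}) forces $\Delta\phi\leqslant 0$ (the distance to the boundary of a convex set is a concave function). I would then introduce the barrier
\[
B(x) \;:=\; \tfrac{\epsilon}{2}\,\phi(x)^{2},
\]
for which $\Delta B = \epsilon|\nabla\phi|^{2}+\epsilon\phi\Delta\phi \leqslant \epsilon \leqslant \Delta w$, so $w-B$ is subharmonic on $V_{\rho}^{-}\cap\Omega$. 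The $L^{\infty}$ bound on $u$ together with the boundedness of $\Psi$ on $V_{\rho}^{-}$ gives $w\leqslant K$ for some constant $K>0$ depending only on the data; the largeness condition (\ref{large enough}) is then interpreted as $\tfrac{\epsilon}{2}\rho^{2}\geqslant K$, so that $B\geqslant w$ on the interior parallel sheet $\{\phi=\rho\}$ of $V_{\rho}^{-}$.

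Applying the weak maximum principle to $w-B$ on $V_{\rho}^{-}\cap\Omega$ (and localizing around a candidate point to handle the lateral portion of $\partial V_{\rho}^{-}$), any positive supremum of $w-B$ must be attained on $\Gamma_{\epsilon,\delta}$. If that supremum is $\leqslant 0$, then $w\leqslant B=0$ on $\Gamma_{\epsilon,\delta}$, and together with $w\geqslant 0$ this gives $w\equiv 0$ on $\Gamma_{\epsilon,\delta}$, i.e.\ $\Gamma_{\epsilon,\delta}\subset I_{\psi}$. Otherwise, the supremum is realised at some $y_{\ast}\in\Gamma_{\epsilon,\delta}$ with $(w-B)(y_{\ast})>0$; since $B(y_{\ast})=0$ this forces $w(y_{\ast})>0$, so $y_{\ast}$ lies in the non-coincidence set and $\partial_{\nu}w(y_{\ast})\leqslant -\delta\leqslant 0$. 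Hopf's boundary lemma, valid because $\Gamma_{\epsilon,\delta}\in C^{3}$ supplies the interior sphere condition and $w-B$ is a non-constant subharmonic function, forces $\partial_{\nu}(w-B)(y_{\ast})>0$; but $\partial_{\nu}B(y_{\ast})=\epsilon\phi(y_{\ast})\nabla\phi(y_{\ast})\cdot\nu=0$ because $\phi=0$ on $\Gamma_{\epsilon,\delta}$, so $\partial_{\nu}w(y_{\ast})>0$, contradicting the Signorini bound.

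The hardest step is the legitimacy of the pointwise boundary manipulation in the last paragraph: one must upgrade the bare assumption $u\in H^{1}(\Omega)\cap L^{\infty}(\Omega)$ to $C^{1}$ (in fact $C^{1,\alpha}$) regularity of $u$ up to the non-coincidence portion of $\Gamma_{\epsilon,\delta}$, in order to apply Hopf's lemma and to speak of pointwise values of $\partial_{\nu}u$. This is recovered locally from the equation $-\Delta u=f$ together with the Neumann identity $\partial_{\nu}u=g$ that holds wherever $u>\psi$, combined with the $C^{3}$ regularity of $\Gamma_{\epsilon,\delta}$ and standard elliptic regularity for mixed Signorini--Neumann boundary data. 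A subsidiary technicality is the handling of the lateral boundary of $V_{\rho}^{-}$ (where the normal parametrization may degenerate), which is absorbed by localizing the entire argument in a small neighborhood of the candidate contradiction point.
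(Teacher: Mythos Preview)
Your route differs substantially from the paper's. The paper does not argue by contradiction via Hopf's lemma; instead it (i) passes to the regularized Problem~$\alpha$ to secure $H^{2}$ regularity, (ii) for each fixed $x_{0}\in\Gamma_{\epsilon,\delta}$ builds the \emph{point-centered} barrier $\overline u(x)=c(2N)^{-1}|x-x_{0}|^{2}$ on $D=\Omega\cap B(x_{0},R)$, (iii) invokes a comparison principle for the mixed Dirichlet/Signorini problem on $D$, and (iv) lets $\alpha\to 0$. Convexity enters as $\cos(\nu(\xi),\xi-x_{0})\geqslant 0$, giving $\partial_{\nu}\overline u\geqslant 0\geqslant\tilde g$ on $\partial_{2}D$; the point-centered quadratic is automatically large on \emph{all} of $\partial_{1}D=\partial D\setminus\Gamma$, so the lateral boundary never becomes an issue.

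Your distance-based barrier $B=\tfrac{\epsilon}{2}\phi^{2}$ together with the Hopf contradiction is a reasonable alternative in spirit, but the step you label ``localizing around a candidate point to handle the lateral portion of $\partial V_{\rho}^{-}$'' is a genuine gap, not a technicality. On the lateral sheet $\{\xi\in\partial\Gamma_{\epsilon,\delta},\, s\in(-\rho,0)\}$ your barrier $B$ ranges over $[0,\tfrac{\epsilon}{2}\rho^{2}]$ and nothing forces $w-B\leqslant 0$ there, so the weak maximum principle does not place the supremum on $\Gamma_{\epsilon,\delta}$ and the Hopf step cannot be launched. Shrinking to a small neighborhood of a hypothetical non-contact point $y_{\ast}$ does not help with \emph{this} barrier: $B$ vanishes on $\Gamma_{\epsilon,\delta}$, hence is small near $y_{\ast}$ and again fails to dominate $w$ on the new lateral boundary. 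The paper's point-centered quadratic is exactly what cures this, since $|x-x_{0}|^{2}$ is large on the whole of $\partial D\setminus\Gamma$ once $R$ is large (this is how condition~(\ref{large enough}) is used). Your argument can be salvaged, but the natural repair essentially reproduces the paper's local construction; alternatively one must add to $B$ a term growing in the tangential directions along $\Gamma_{\epsilon,\delta}$. As a separate remark, the paper's detour through Problem~$\alpha$ followed by $\alpha\to 0$ is precisely its device for avoiding the boundary-regularity upgrade you invoke in your final paragraph.
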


We note that, in the case in which $f+\Delta \Psi =0$, by assuming the
coincidence set $I_{\psi }$ in the class of regular subsets of $\partial
\Omega $, a necessary condition in order to have a positively measured
coincidence set is that $\int_{I_{\psi }}g-\partial _{\nu }\Psi \leq 0$
(see e.g. \cite{Friedman:1967bn}). So, in some sense, Theorem 
\ref{th:coincidence-convex} shows that a pointwise balance estimate
(\ref{Balance negative}) on a \textit{good} part of $\partial \Omega $ is
enough to identify where the coincidence is taking place.

\bigskip

\begin{proof}
We first prove the result for the solutions {$\tilde{u}_{\alpha }$} of
Problem $\alpha$, for any $\alpha >0.$ Notice that it is enough to
prove the conclusion for data, $\psi =0,\widetilde{f}:=f+\Delta \Psi -$ $%
\alpha \Psi $ and $\tilde{g}:=g-\partial _{\nu }\Psi $. Indeed, we let $%
\tilde{u}$ be the solution of Problem 1 under these conditions,
and readily see that $u:=\tilde{u}+\Psi $ solves the problem with data $%
f,\psi ,g$. Let $x_{0}\in \Gamma _{\epsilon ,\delta }$ and, for $\rho >0$,
let $V_{\rho }^{-}$ be the tubular semi-neighborhood of $\Gamma _{\epsilon
,\delta }$ defined by the $C^{2}$ parametrization
\begin{equation*}
x=x(\xi ,s)=\xi +s\nu (\xi )\text{, for }\xi \in \Gamma _{\epsilon ,\delta
},s\in (-\rho ,0).
\end{equation*}%
Let $\Vert \widetilde{u}\Vert _{\infty }\leqslant M$. Define $R:=\min \{%
\mathop{\rm dist}\nolimits(x_{0},\Gamma \backslash \Gamma _{\epsilon ,\delta
}),\rho ,MN\}$ and consider the subset $D:=\Omega \cap B(x_{0},R).$ Define $%
\partial _{1}D:=\partial D\backslash \Gamma $ and $\partial _{2}D:=\partial
D\cap \Gamma \subset \Gamma _{\epsilon ,\delta }$ as in Figure
\ref{fig:proof-coincidence-convex}.

\begin{figure}[h]
\centering
\raisebox{-0.5\height}{\includegraphics{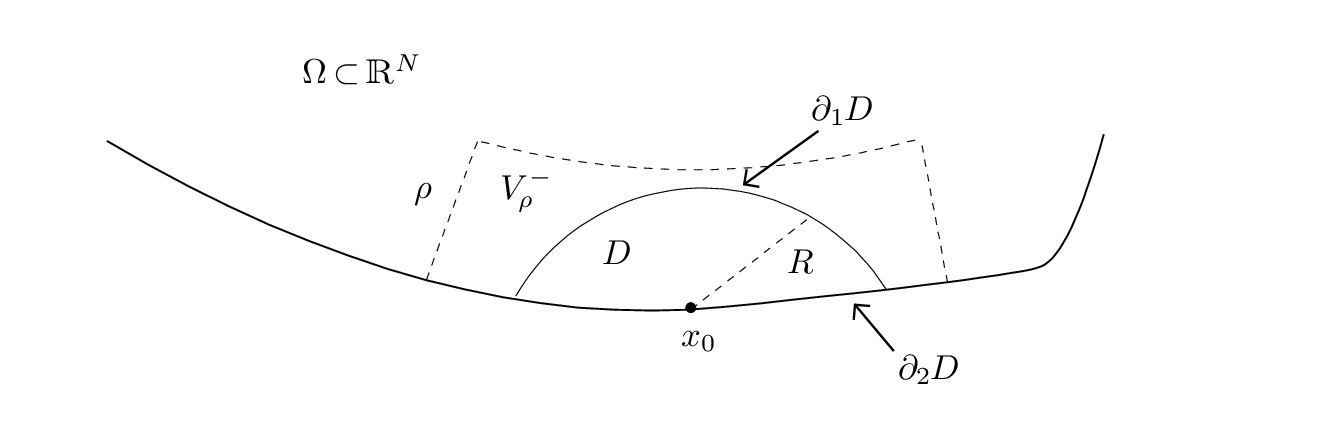}}
\caption{The setting in the proof of Theorem \protect\ref%
{th:coincidence-convex}.}
\label{fig:proof-coincidence-convex}
\end{figure}

For some $c > 0$ define now in $D$ the function $\overline{u}_{\alpha}
(x)=c(2N)^{-1}|x-x_{0}|^{2}$. In $D$ we have
\begin{equation*}
-\Delta \overline{u}_{\alpha }+\alpha \overline{u}_{\alpha }=-c+\frac{\alpha
c}{2N}|x-x_{0}|^{2}\geqslant - \epsilon \geq \widetilde{f}\text{ in }D,
\end{equation*}
assuming
\begin{equation*}
0<c\leq \epsilon .
\end{equation*}
Additionally, on $\partial _{1}D$, from assumption (\ref{large enough}) we
have
\begin{equation*}
\overline{u}_{\alpha }\geqslant \frac{c}{2N}R^{2}\geq M\geqslant \tilde{u}%
_{\alpha }
\end{equation*}%
which holds if
\begin{equation*}
R\geq \frac{2NM}{c}.
\end{equation*}
Moreover, since by construction $\overline{u}_{\alpha }$ is non-negative, on
the subset of the coincidence set $I^{\alpha }:=I_{0}^{\alpha }\cap \partial
_{2}D$ we have $\overline{u}_{\alpha }\geqslant \tilde{u}_{\alpha }$ a.e. $%
I^{\alpha }$ and thus a.e. on all of $\partial _{1}D\cup I^{\alpha }$. Now,
the Signorini conditions imply that it has to be $\partial _{\nu }\widetilde{%
u}_{\alpha }=\tilde{g}$ over $\partial _{2}D-I^{\alpha }$, and, on the other
hand we have
\begin{equation}
\partial _{\nu }\overline{u}_{\alpha }(\xi )=c N^{-1}|\xi -x_{0}|\cos (\nu
(\xi ),\xi -x_{0})\geqslant -\delta \geqslant \tilde{g}\text{ on }\partial
_{2}D  \label{Neumann}
\end{equation}
where we used the convexity of $\Omega \cap V_{\rho }^{-}$ (positivity of
the cosine) in the first inequality. Then, by the comparison principle
applied to the associated problem on the set $D$, with Signorini boundary
conditions on $\partial _{2}D$ and with Dirichlet conditions on 
$\partial_{1}D$ (see, e.g. \cite{Brezis:1972tt}) we deduce finally
that $\tilde{u}_{\alpha }\leqslant \overline{u}_{\alpha }$ in $D$ and that
the same inequality holds for the traces, that is:
\begin{equation*}
0\leqslant \tilde{u}_{\alpha }\leqslant \frac{c}{2N}|\xi -x_{0}|^{2},
\text{ over }\Gamma \cap B(x_{0}, R)
\text{ (in the sense of }H^{1/2}(\Gamma )\text{)}.
\end{equation*}
Letting $\xi \rightarrow x_{0}$ we conclude that for every $\alpha >0$,
$\tilde{u}_{\alpha }(x)=0$ a.e. in $\Gamma_{\epsilon, \delta}$ uniformly
(since the estimate on the location of $I^{\alpha }:=I_{0}^{\alpha }\cap
\partial _{2}D${\ is independent of }$\alpha $).

\noindent\textit{Final step.} For $\alpha \rightarrow 0$ we let $\tilde{u}_{\alpha }$
be the solutions with data $\widetilde{f}=0$, $\widetilde{\psi }=0$ and $%
\tilde{g}$ of Problem $\alpha$. The regularity result mentioned above implies
that we have $\Vert \tilde{u}_{\alpha }\Vert _{\infty}\leqslant M$
uniformly on $\alpha$. Then, by well known results we have $%
\tilde{u}_{\alpha }\rightarrow \tilde{u}_{0}$ strongly in $H^{1}(\Omega )$
and therefore $\tilde{u}_{0}=0$ on $\Gamma_{\epsilon ,\delta }$.
\end{proof}

\bigskip

\begin{remark}
In fact we can also consider the case $\epsilon =0$
and $\delta =0$ under the assumptions of Theorem \ref{th:coincidence-convex}.
Indeed, assume for the moment that we can construct, for some $c>0$, a function
$w\in H^{2}(D)$ satisfying

\begin{equation*}
\left\{
\begin{array}{lll}
-\Delta w\geq \widehat{\epsilon } & \text{in} & D, \\
w\geqslant 0 & \text{in} & \overline{D}, \\
w=0 & \text{on} & \partial _{1}D, \\
\partial _{\nu }w=-\widehat{\delta } & \text{on} & \partial _{2}D.%
\end{array}%
\right.
\end{equation*}%
Then, the function $\widehat{v}(x):={\tilde{u}_{\alpha }}(x)-w(x)$ is
a solution of Problem $\alpha$ for data $\psi =0,
\widehat{f} := f+\Delta \Psi -$ $\alpha \Psi $ $+\Delta w-\alpha w$ and
$\widehat{g}:= \widetilde{g}-\widehat{\delta }$.
Taking $w(x)=w(s\nu )=\varphi (s)$ and using the expression of the Laplacian
on $V_{\rho }^{-}$ (see, e.g. \cite{gilbarg_elliptic_2001} and
{\cite[§4.3.5, p. 62]{Sperb:1981tx}}), the construction of such a $w$ is
reduced to finding $\varphi (s)$ such that

\begin{equation*}
\left\{
\begin{array}{rll}
-\varphi ^{\prime \prime }(s)-(N-1)H(\xi ,s)\varphi ^{\prime }(s) &
\geqslant  & c, \\
\varphi (s) & \geqslant  & 0 \\
\varphi (0) & = & 0, \\
\varphi ^{\prime }(0) & = & -\delta ,%
\end{array}%
\right.
\end{equation*}%
for $s\in (-R,0)$, where $H(\xi ,s)$ denotes the mean curvature of the
hypersurface, to a distance $s$ of $\Gamma _{\epsilon ,\delta }$, i.e. at
points $x=x(\xi ,s)=\xi +s\nu (\xi ),\ \xi \in \Gamma _{\epsilon ,\delta }$.
When $\Omega \cap V_{\rho }^{-}$ is a convex set, as required in (\ref{convexity}),
we have that $H(\xi ,s)$ is non-negative and bounded for $R$ small enough depending
on this convex part of the boundary and thus $w$ can be made explicit. 
\end{remark}

\section{Sharper estimates and further remarks}
\label{sec:dispense}

One of the main goals of this section is to obtain some sharper location
estimates on the coincidence set and to extend the previous results
while dispensing with the convexity condition (\ref{convexity}) on the tubular
semi-neighborhood of $\Gamma _{\epsilon, \delta }$. We start by showing in a concrete
example that this goal is not impossible.

\begin{example}
\label{exa:ring}Let $0<R_{0}<R_{1}$ and define the open ring
\begin{equation*}
\Omega :=\{x\in B_{R_{1}}(0)\backslash \overline{B}_{R_{0}}(0)\}
\end{equation*}%
with inner boundary $\Gamma _{0}:=\partial B_{R_{0}}(0)$ and outer
boundary $\Gamma _{1}:=\partial B_{R_{1}}(0)$. Let $\varepsilon >0$,
$R_{\varepsilon }\in (R_{0},R_{1}]$ and
$f\in L^{2}(\Omega )\cap L^{\infty}(\Omega )$ be such that
\begin{equation*}
f(x)\leqslant -\varepsilon \text{\quad a.e. in\quad }\Omega _{\varepsilon
}:=B_{R_{\varepsilon }}(0)\backslash B_{R_{0}}(0),
\end{equation*}%
and consider the special formulation of the Signorini problem
\begin{equation*}
\left\{
\begin{array}{rlll}
-\Delta u & = & f & \text{ in }\Omega , \\
u & = & 0 & \text{ on }\Gamma _{1}%
\end{array}%
\right. \text{ and }\left\{
\begin{array}{rlll}
\partial _{\nu }u & \geqslant & 0 & \text{ on }\Gamma _{0}, \\
u & \geqslant & 0 & \text{ on }\Gamma _{0}, \\
u\partial _{\nu }u & = & 0 & \text{ on }\Gamma _{0}.%
\end{array}%
\right.
\end{equation*}
\end{example}

Notice that we now do not need the compatibility condition since we have
Dirichlet conditions on $\Gamma _{1}$.

We claim that the coincidence set is the whole $\Gamma_0$, i.e. 
$u_{|\Gamma _{0}}\equiv 0$.

\begin{figure}[h]
\centering
\raisebox{-0.5\height}{\includegraphics{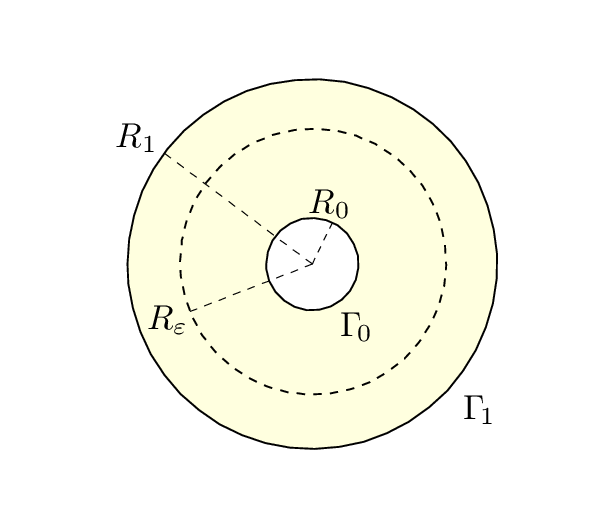}}
\caption{The contact set for a ring is the whole boundary.}
\end{figure}
To see this we apply again the comparison principle for the associated
Problem $\alpha$ for any $\alpha > 0 $ and then
we pass to the limit $\alpha \rightarrow 0$ as in Theorem \ref{th:coincidence-convex}.
We define over the ring $\Omega _{\varepsilon }$, the function
\begin{equation*}
\overline{u}(x):=c(|x|-R_{0})^{2}
\end{equation*}
for some constant $c>0$ to be determined later. Then, writing $r$ for $|x|$
we have
\begin{eqnarray*}
-\Delta \overline{u} &=&-\frac{d^{2}\overline{u}}{dr^{2}}-\frac{(N-1)}{r}%
\frac{d\overline{u}}{dr} \\
&=&-2c-\frac{(N-1)}{r}2c(r-R_{0}) \\
&\geqslant &-2c\left( 1+\frac{N-1}{R_{0}}(R_{\varepsilon }-R_{0})\right) .
\end{eqnarray*}
Consequently $-\Delta \overline{u}\geqslant -\varepsilon \geqslant f$ on $%
\Omega _{\varepsilon }$ whenever $c\leqslant \frac{\varepsilon }{2}\left( 1+%
\frac{N-1}{R_{0}}(R_{\varepsilon }-R_{0})\right) ^{-1}$. For instance, we
may take $c:=\varepsilon R_{0}/(4NR_{1})$. We set $\overline{f}:=-\Delta
\overline{u}$ in $\Omega _{\varepsilon }$ and by construction $\overline{f}%
\geqslant f$. Let $M:=\Vert u\Vert _{\infty ,\Omega }$. In order to apply
the comparison lemma, we need the condition $\overline{u}\geqslant M$ on $%
\Gamma _{\varepsilon }:=\partial B_{R_{\varepsilon }}(0)$, that is: $%
c(R_{\varepsilon }-R_{0})^{2}\geqslant M$ or, equivalently, $R_{\varepsilon
}\geqslant R_{0}+\sqrt{M/c}=R_{0}+2(MNR_{1}\varepsilon
^{-1}R_{0}^{-1})^{1/2} $ and this is possible for large enough values of $%
R_{1}$. Furthermore, on $I=\{x\in \Gamma _{0}:u(x)=0\}$ we have $\overline{u}%
\geqslant u$ too, since by construction $\overline{u}$ is non-negative. Thus
$\overline{u}\geqslant u $ on $\Gamma _{\varepsilon }\cup I$ and on the
complement $\Gamma _{0}\backslash I$, where $u>0$, the Signorini condition
implies $\partial _{\nu }u=0$ and also $\partial _{\nu }\overline{u}%
=-c(|x|-R_{0})x=0$. Applying the comparison principle we deduce $u\leqslant
\overline{u}$ on $\overline{\Omega }_{\varepsilon }$, and in particular $u=0$
over $\Gamma _{0} $.

\bigskip

The following result improves Theorem \ref{th:coincidence-convex}
for the case $\epsilon >0$ and $\delta >0$.

\begin{theorem}
Assume the conditions of Theorem \ref{th:coincidence-convex} except
condition (\ref{convexity}), for some fixed $\epsilon >0$ and $\delta >0$.
Then we obtain the location estimate
$\Gamma _{\epsilon ,\delta }\subset I_{\psi }$ on the coincidence set of $u$.
\end{theorem}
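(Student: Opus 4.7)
The strategy is to mimic the proof of Theorem \ref{th:coincidence-convex}, but to replace the spherical barrier $c(2N)^{-1}|x-x_0|^2$ (whose control of the Neumann condition on $\Gamma$ used convexity explicitly in \eqref{Neumann}) by a barrier built from the signed distance to $\Gamma_{\epsilon,\delta}$, along the lines already sketched in the Remark after Theorem \ref{th:coincidence-convex}. The $C^3$-regularity of $\Gamma_{\epsilon,\delta}$, which is already part of the standing hypotheses, is what takes over the role of convexity: it provides a uniform bound on the mean curvature $H(\xi,s)$ of the parallel hypersurfaces inside the tubular neighborhood $V_\rho^-$.

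Step 1 (reduction). Exactly as in the proof of Theorem \ref{th:coincidence-convex}, reduce to $\tilde\psi\equiv 0$, $\tilde f = f+\Delta \Psi - \alpha \Psi \leq -\epsilon$, $\tilde g = g-\partial_\nu \Psi \leq -\delta$, and work with the solutions $\tilde u_\alpha$ of Problem $\alpha$. It suffices to show $\tilde u_\alpha \equiv 0$ on $\Gamma_{\epsilon,\delta}$ with an estimate that is uniform in $\alpha$, and then pass to the limit $\alpha\to 0$ just as in the final step of the earlier proof.

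Step 2 (barrier construction). Look for a barrier of the form $w(x) = \varphi(s(x))$ in the tubular neighborhood $V_R^-$, where $s(x)\in(-\rho,0]$ is the signed distance to $\Gamma_{\epsilon,\delta}$ and $\rho$ is as in the hypothesis. Using the expression of the Laplacian recalled in the Remark, the supersolution requirement $-\Delta w+\alpha w \geq \tilde f$ reduces, via $\alpha\varphi \geq 0$ and $\tilde f \leq -\epsilon$, to the ODE inequality
\[
-\varphi''(s)-(N-1)H(\xi,s)\varphi'(s)\ \geq\ \epsilon,\qquad s\in[-R,0],
\]
together with $\varphi(0)=0$, $\varphi'(0)=-\delta$ (so that $w=0$ and $\partial_\nu w =-\delta\geq \tilde g$ on $\Gamma_{\epsilon,\delta}$), and $\varphi\geq 0$ on $[-R,0]$. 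Since $\Gamma_{\epsilon,\delta}$ is $C^3$ and $\overline{V_\rho^-}$ is compact, the mean curvature is bounded: $|H(\xi,s)|\leq H_0<\infty$. The explicit choice
\[
\varphi(s)\ =\ -\delta s - \tfrac{1}{2}A s^2,\qquad A:=\epsilon+(N-1)H_0\,\delta,
\]
then verifies, for $R$ sufficiently small (depending only on $\epsilon,\delta, H_0$), both $\varphi\geq 0$ on $[-R,0]$ and
\[
-\varphi''(s)-(N-1)H(\xi,s)\varphi'(s)\ =\ A+(N-1)H(\xi,s)(\delta+As)\ \geq\ \epsilon,
\]
by a direct estimate using $H\geq -H_0$. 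In this way the convexity hypothesis \eqref{convexity} is eliminated.

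Step 3 (comparison and conclusion). Apply the Signorini-type comparison principle on $D=V_R^-\cap\Omega$ (localized around a fixed $x_0\in\Gamma_{\epsilon,\delta}$ if needed), with $w$ in place of $\overline u_\alpha$: the bound $\partial_\nu w\geq -\delta\geq \tilde g$ on $\partial_2 D$ now replaces \eqref{Neumann}, and the supersolution inequality from Step 2 handles the interior. Letting $x\to x_0$ along $\Omega$ yields $\tilde u_\alpha(x_0)=0$, uniformly in $\alpha$; passing to $\alpha\to 0$ as in the final step of Theorem \ref{th:coincidence-convex} gives $\Gamma_{\epsilon,\delta}\subset I_\psi$.

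The main obstacle is reconciling the smallness of $R$ forced by the ODE analysis in Step 2 with the domination $w\geq \|\tilde u_\alpha\|_{\infty}\leq M$ required on the inner portion $\partial_1 D$ of the test domain: the pure tubular barrier $\varphi$ is bounded by $\delta^2/(2A)$ and may not reach $M$. This is resolved by exploiting the freedom in the hypothesis that $\rho$ be \emph{large enough}, namely by combining $\varphi(s)$ with the classical quadratic term $c(2N)^{-1}|x-x_0|^2$ (whose sole role is sup-norm domination on $\partial_1 D$, under the enlarged $\rho$) while letting $\varphi$ alone carry the Neumann correction near $\Gamma$. Verifying that the two pieces can be balanced so that the normal-derivative condition is preserved is the only genuinely technical point.
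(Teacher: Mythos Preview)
Your approach is genuinely different from the paper's, and considerably more complicated. The paper does \emph{not} change the barrier at all: it keeps the same quadratic $\overline u_\alpha(x)=c(2N)^{-1}|x-x_0|^2$ from Theorem~\ref{th:coincidence-convex} and simply redoes the estimate \eqref{Neumann}. Without convexity one only has
\[
\inf_{x_0\in\Gamma_{\epsilon,\delta},\ \xi\in\partial_2 D}\cos(\nu(\xi),\xi-x_0)\ \geq\ -\theta_0
\]
for some $\theta_0\geq 0$ (coming from the $C^3$ regularity of $\Gamma_{\epsilon,\delta}$), whence
\[
\partial_\nu\overline u_\alpha(\xi)=cN^{-1}|\xi-x_0|\cos(\nu(\xi),\xi-x_0)\ \geq\ -cN^{-1}R\,\theta_0\ \geq\ -\delta\ \geq\ \tilde g
\]
as soon as $c\leq \delta N/(\theta_0 R)$. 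Taking $c\leq\min(\epsilon,\delta N/(\theta_0 R))$ and $R\geq 2NM/c$ closes the argument. The strict positivity of $\delta$ is thus used precisely to absorb the possibly negative cosine; nothing else changes.

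Your route via a signed-distance barrier $\varphi(s)$ is in the spirit of the Remark after Theorem~\ref{th:coincidence-convex} and of Theorem~\ref{th:coincidence-general}, but as you yourself note it leaves a real gap: $\varphi$ is bounded by $\delta^2/(2A)$, so you cannot get $\overline u\geq M$ on $\partial_1 D$ without reintroducing the quadratic piece. Once you do add $c(2N)^{-1}|x-x_0|^2$, its normal derivative on $\partial_2 D$ again contributes a term of size $cN^{-1}R\theta_0$ with the wrong sign, and ``balancing'' this is exactly the computation in the paper's proof---at which point the $\varphi$ part becomes superfluous. In short, the unresolved ``technical point'' in your last paragraph is the whole proof, and resolving it collapses your argument to the paper's much shorter one.
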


\begin{proof}
We follow the same arguments of the proof of Theorem \ref{th:coincidence-convex},
but instead of condition (\ref{convexity}) we use the fact that we can assume that
\begin{equation}
\inf_{x_{0}\in \Gamma _{\epsilon ,\delta },x\in V_{\rho }^{-}}\{\cos
(x-x_{0},\nu (x_{0})\}\geq -\theta _{0}(\delta ,\rho )
\end{equation}%
for some continuous function $\theta _{0}(\delta ,\rho )\geq 0$ (as a matter
of fact $\theta _{0}(\delta ,\rho )\in \lbrack -1,1]$). Then, in (\ref{Neumann})
we argue instead with
\begin{equation*}
\partial _{\nu }\overline{u}_{\alpha }(\xi )=cN^{-1}|\xi -x_{0}|\cos (\nu
(\xi ),\xi -x_{0})\geqslant -cN^{-1}R\theta _{0}(\delta ,\rho )\geqslant
-\delta \geq \tilde{g}\text{ on }\partial _{2}D
\end{equation*}
which holds by taking
\begin{equation*}
c N^{-1}\theta _{0}(\delta ,\rho )R\leq \delta ,
\end{equation*}%
i.e.
\begin{equation*}
c\leq \frac{\delta N}{\theta _{0}(\delta ,\rho )R}.
\end{equation*}
Therefore it is enough to take
\begin{equation*}
c\leq \min (\epsilon ,\frac{\delta N}{\theta _{0}(\delta ,\rho )R})
\end{equation*}%
and $R\geq \frac{2NM}{c}.$
\end{proof}

\bigskip

\begin{remark}
Notice that when the tubular semi-neighborhood of $\Gamma
_{\epsilon ,\delta }$ is convex then $\theta _{0}(\delta ,\rho )=0$. We
conjecture that, under suitable additional conditions, it should be possible
to dispense with at least one of the assumptions $\epsilon >0$ or $\delta >0$
on the semi-neighborhood of $\Gamma _{\epsilon ,\delta }$. At present it seems
that this fact remains an open problem in the literature. One could try to argue
as in the previous Remark in order to extend the result to the case $\epsilon =0$
but with $\delta >0$. However, it is not entirely clear how to construct
the function $w$ without condition (\ref{convexity}).
Notice that in the special case in which we assume that all the mean
curvatures  $H(\xi ,s)$ are constant and equal to $H$, we may actually
solve the auxiliary equation of the above Remark without requiring
$H\geqslant 0$, but under suitable choices of the interval
of definition of such functions. Indeed, the exact solution is given by
$\varphi (s)=\varphi _{h}(s)+\varphi _{p}(s)$. For the general solution of
the homogeneous equation $\Phi ^{\prime }=A\Phi $ with $\Phi =(\varphi
,\varphi ^{\prime })$, $A=\left(
\begin{array}{cc}
0 & 1 \\
0 & -b%
\end{array}%
\right) $ and $b=(N-1)H$ one has $\Phi (s)=\mathrm{e}^{As}\vec{\alpha}$ with
$\mathrm{e}^{As}=\left(
\begin{array}{cc}
1 & \frac{-1}{b}\mathrm{e}^{-bs} \\
0 & \mathrm{e}^{-bs}%
\end{array}%
\right) ,\vec{\alpha}\in \mathbbm{R}^{2}$. Therefore:
\begin{equation*}
\varphi _{h}(s)=\alpha _{1}-\frac{\alpha _{2}}{b}\mathrm{e}^{-bs}.
\end{equation*}%
For the particular solution of the inhomogeneous equation we find with the
Ansatz $\varphi _{p}(s)=\beta _{1}s^{2}+\beta _{2}s$:
\begin{equation*}
\varphi _{p}(s)=\frac{-c}{b}s.
\end{equation*}%
Introducing the boundary conditions we arrive at $\alpha _{1}=\alpha
_{2}/b=(c/b-\delta )/b$ and
\begin{equation*}
\varphi (s)=\left( \frac{c}{b^{2}}-\frac{\delta }{b}\right) (1-\mathrm{e}%
^{-bs})-\frac{c}{b}s,\quad s\leqslant 0.
\end{equation*}%
Finally we have $\varphi (s)\geqslant 0$ over some interval $(-\varepsilon ,0)$
because $\varphi ^{\prime }$ is continuous and negative at zero, therefore
in an interval around it, and $\varphi (0)=0$, meaning that the function
decreases to zero from the left.
\end{remark}

Our next goal is to improve the location estimates. In order to achieve this
we shall not use a function of the Euclidean norm as local supersolution,
but a differentiable extension of the intrinsic distance over the
manifold $\partial \Omega $. The gradient is then tangent at every point, $%
\partial _{\nu }\tilde{v}=0$ and we may build simple supersolutions. Let $%
l(\gamma _{a\nocomma b})$ denote the length of a piecewise $C^{1}$ curve $%
\gamma _{a\nocomma b}$ joining two points $a,b\in \Gamma $. Fix a point $%
x_{0}\in \Gamma $ and an open neighborhood $\Gamma _{0}$ of $x_{0}$ in $%
\Gamma $ whose closure is the graph of a Lipschitz map $\varphi :\overline{U}%
\rightarrow \mathbbm{R}$. Define the {%
\textbf{intrinsic distance to $\mathbf{x_0}$ over $\mathbf{\Gamma}$}} as
\begin{equation*}
d_{0}(x):=\inf \{l(\gamma _{x_{0}\nocomma x}):\gamma \in C^{1}([0,1],\Gamma
_{0}),\gamma (0)=x_{0},\gamma (1)=x\},x\in \Gamma _{0}
\end{equation*}%
With this distance $\overline{\Gamma }_{0}$ is a complete metric space
determining the same topology as the differential structure. For $\Gamma _{0}
$ smooth enough, $d_{0}$ is a non-negative function in $C^{2}(\Gamma _{0})$
which we now extend. Let $V_{\rho }(\Gamma _{0})$ be a tubular neighborhood
of $\Gamma _{0}$ with the $C^{2}$ parametrization
\begin{equation*}
x=x(\xi ,s)=\xi +s\nu (\xi )\text{ for }\xi \in \Gamma _{0},s\in (-\rho
,\rho )
\end{equation*}%
and define
\begin{equation*}
\tilde{d}_{0}(x)=\tilde{d}_{0}(\xi ,s):=[s^{2}+d_{0}(\xi )^{2}]^{1/2}\text{
for every }x\in V_{\rho }(\Gamma _{0}).
\end{equation*}%
Now let $V_{0}:=V_{\rho }(\Gamma _{0})\backslash \{x_{0}\}$. The function $%
\tilde{d}_{0}$ is clearly in $C^{2}(V_{0})$ and we know that
\begin{equation}
\nu (\xi )\text{=-}\nabla \tilde{d}_{0}(\xi )\text{ for any }\xi \in \Gamma
_{0}  \label{normal equals gradient}
\end{equation}%
(see \cite[Theorem 8.5, Chapter 7]{delfour_shapes_2011}).
Furthermore, for any given $D$ precompact in $V_{0}$, $%
D\Subset V_{0}$, there exist positive constants $c_{d},C_{d}$ and $c_{\Delta
}$ depending on $\tilde{d}$ and $D$ such that
\begin{equation*}
c_{d}|x|\leqslant \tilde{d}_{0}(x)\leqslant C_{d}|x|\text{ and }\Delta
\tilde{d}_{0}(x)\leqslant c_{\Delta }\text{ for every }x\in D\Subset V_{0}.
\end{equation*}%
The second assertion is obvious since $\tilde{d}_{0}\in C^{2}(V_{0})$. For
the first simply let $m=\min_{x\in D}\tilde{d}_{0}(x)$, $M=\max_{x\in D}%
\tilde{d}_{0}(x)$, $l=\min_{x\in D}|x|$, $L=\max_{x\in D}|x|$, where we may
assume $l>0$ after a translation placing $x_{0}$ at the origin. Then it
suffices to take $c_{d}:=m/L$ and $C_{d}:=M/l$. Finally, using the extension
$\tilde{d}_{0}$ we may define for sufficiently small $\tau $ the balls
\begin{equation}
\tilde{B}_{0}(\tau ):=\{x\in V_{\rho }(\Gamma _{0}):\tilde{d}_{0}(x)<\tau \}.
\label{eq:def-extension-balls}
\end{equation}
Equipped with all this we may finally prove our main result:

\begin{theorem}
\label{th:coincidence-general}Let $\Omega $ be an open set in $\mathbbm{R}%
^{N}$ and suppose that the data $f,g$ and $\psi$ lead to a unique solution
$u\in H^{1}(\Omega )\cap L^{\infty }(\Omega )$.
Assume that there exist $\epsilon ,\delta \geq 0$, a subset $%
\Gamma _{\epsilon ,\delta } \subset \partial \Omega $ of class $C^{3}$, and a
tubular semi-neighborhood $V_{\rho }^{-}$ of $\Gamma _{\epsilon ,\delta }$
equipped with the intrinsic distance, for some $\rho > 0$ such that
\begin{equation}
\rho >0\text{ is large enough and }V_{\rho }^{-}\subset \overline{\Omega }
\end{equation}%
and such that if $\Psi \in H^{2}(V_{\rho }^{-})$ is a nonnegative
extension of $\psi $ to $V_{\rho }^{-}$ (i.e. $\Psi =\psi $ on $%
\Gamma _{\epsilon ,\delta }$) then we have that
\begin{equation*}
f+\Delta \Psi \leq -\epsilon \text{ on } V_{\rho}^{-},
\end{equation*}%
and
\begin{equation}
g-\partial _{\nu }\Psi \leqslant -\delta \text{ \ on \ }\Gamma _{\epsilon
,\delta }.
\end{equation}
If $\epsilon$ and $\delta$ are strictly positive, then one has the location
estimate $\Gamma _{\epsilon ,\delta }\subset I_{\psi }$, the coincidence
set of $u$.
\end{theorem}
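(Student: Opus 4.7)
The plan is to mirror the proof of Theorem~\ref{th:coincidence-convex}, but to replace the radial supersolution $c(2N)^{-1}|x-x_{0}|^{2}$ by a local supersolution built from the extended intrinsic distance $\tilde{d}_{0}$. The key geometric advantage is that $\tilde{d}_{0}(\xi,s)=[s^{2}+d_{0}(\xi)^{2}]^{1/2}$ is minimal in $s$ at $s=0$, so its normal derivative vanishes on $\Gamma_{\epsilon,\delta}\setminus\{x_{0}\}$ (a tangentiality already recorded in (\ref{normal equals gradient})). This is precisely what dispenses with the convexity condition (\ref{convexity}): the cosine of $(\nu(\xi),\xi-x_{0})$ that forced convexity in the Neumann estimate (\ref{Neumann}) is simply not present when we differentiate $\tilde{d}_{0}^{2}$ along $\nu$.

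The first step is the standard reduction used in the proof of Theorem~\ref{th:coincidence-convex}: set $\widetilde{u}:=u-\Psi$, $\widetilde{f}:=f+\Delta \Psi -\alpha \Psi$ and $\widetilde{g}:=g-\partial _{\nu }\Psi$ so that the obstacle becomes zero, and work with the regularized Problem~$\alpha$ for $\alpha>0$, passing to the limit $\alpha\to 0^{+}$ at the end by the same strong $H^{1}$ convergence used in the Final step of that proof. Fix $x_{0}\in \Gamma_{\epsilon,\delta}$, construct $d_{0}$ and $\tilde{d}_{0}$ on $V_{\rho}^{-}$ using the $C^{3}$ regularity of $\Gamma_{\epsilon,\delta}$, and consider the set $D:=\Omega\cap \tilde{B}_{0}(\tau)$ (with $\tilde{B}_{0}(\tau)$ defined in (\ref{eq:def-extension-balls})) for $\tau>0$ small. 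Split $\partial D = \partial_{1}D\cup \partial_{2}D$ as in the proof of Theorem~\ref{th:coincidence-convex}, with $\partial_{2}D\subset \Gamma_{\epsilon,\delta}$. The candidate supersolution is
\[
\overline{u}_{\alpha}(x) := \frac{c}{2N}\,\tilde{d}_{0}(x)^{2},\qquad c>0\ \text{to be chosen.}
\]

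Three inequalities then need to be verified. First, on $D$, the bounds $|\nabla \tilde{d}_{0}|\le C_{d}$ and $\Delta \tilde{d}_{0}\le c_{\Delta}$ recorded before (\ref{eq:def-extension-balls}) yield
\[
-\Delta \overline{u}_{\alpha}+\alpha \overline{u}_{\alpha} \ge -\frac{c}{N}\bigl(C_{d}^{2}+\tau c_{\Delta}\bigr)\ge -\epsilon \ge \widetilde{f},
\]
as soon as $c$ and $\tau$ are small enough depending only on $\epsilon$, $C_{d}$ and $c_{\Delta}$. Second, on $\partial_{1}D$, $\overline{u}_{\alpha}\ge c\tau^{2}/(2N)\ge \|\widetilde{u}_{\alpha}\|_{\infty}$ whenever $\tau\ge \sqrt{2NM/c}$, which is exactly the $\rho$-large-enough hypothesis. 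Third, and crucially, on $\partial_{2}D$ one has $\overline{u}_{\alpha}\ge 0$ trivially and, using $\partial_{\nu}\tilde{d}_{0}=0$ on $\Gamma_{\epsilon,\delta}$,
\[
\partial_{\nu}\overline{u}_{\alpha} = \frac{c}{N}\,\tilde{d}_{0}\,\partial_{\nu}\tilde{d}_{0}=0 \ge -\delta \ge \widetilde{g},
\]
irrespective of the geometry of $\Omega\cap V_{\rho}^{-}$. This replaces the convexity-based estimate (\ref{Neumann}) of Theorem~\ref{th:coincidence-convex}.

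The comparison principle for the mixed Signorini/Dirichlet problem on $D$ (cf.~\cite{Brezis:1972tt}) then delivers $\widetilde{u}_{\alpha}\le \overline{u}_{\alpha}$ in $D$, whence on $\Gamma\cap \tilde{B}_{0}(\tau)$ one reads $0\le \widetilde{u}_{\alpha}(\xi)\le (c/2N)\,d_{0}(\xi)^{2}$; taking $\xi\to x_{0}$ forces $\widetilde{u}_{\alpha}(x_{0})=0$ uniformly in $\alpha$, and letting $\alpha\to 0^{+}$ as in Theorem~\ref{th:coincidence-convex} finishes the argument. The main technical obstacle is the verification of the uniform $C^{2}$ control on $\tilde{d}_{0}$ (the constants $C_{d}$ and $c_{\Delta}$, as well as the tangentiality identity) on the curved boundary piece $\Gamma_{\epsilon,\delta}$; this is precisely what the $C^{3}$ regularity hypothesis and the standard tubular-neighborhood parametrization are designed to guarantee.
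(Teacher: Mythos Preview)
Your argument follows the same comparison-principle skeleton as the paper, but the barrier you build is genuinely different: the paper takes the \emph{linear} supersolution $\overline{u}(x)=c\,c_{\Delta}^{-1}\tilde d_{0}(x)$, whereas you take the \emph{quadratic} one $\overline{u}_{\alpha}(x)=\tfrac{c}{2N}\tilde d_{0}(x)^{2}$. In the paper's Neumann step one has $\partial_{\nu}\overline{u}=c\,c_{\Delta}^{-1}\partial_{\nu}\tilde d_{0}=-c\,c_{\Delta}^{-1}\ge -\delta$, and this is precisely where the constraint $c\le \delta c_{\Delta}$ (hence the strict positivity of $\delta$) is spent; in your version the Neumann step is free because $\partial_{s}(\tilde d_{0}^{2})=2s$ vanishes at $s=0$. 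Your quadratic choice also buys a regularity advantage: $\tilde d_{0}^{2}=s^{2}+d_{0}(\xi)^{2}$ is $C^{2}$ through $x_{0}$, so its Laplacian is genuinely bounded on $D$, whereas $\Delta\tilde d_{0}$ itself is singular at $x_{0}$.

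Two caveats. First, you cite (\ref{normal equals gradient}) as recording the \emph{tangentiality} $\partial_{\nu}\tilde d_{0}=0$; but that identity reads $\nu(\xi)=-\nabla\tilde d_{0}(\xi)$, i.e.\ the paper is asserting that $\nabla\tilde d_{0}$ is \emph{normal} along $\Gamma_{0}$ and then uses $\partial_{\nu}\tilde d_{0}=-1$ in its proof. Your direct computation $\partial_{s}\tilde d_{0}\big|_{s=0}=s/\tilde d_{0}\big|_{s=0}=0$ (for $\xi\neq x_{0}$) is what the explicit formula actually gives, but it is not what (\ref{normal equals gradient}) says, so drop that cross-reference. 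Second, in your interior estimate you bound $\tilde d_{0}\,\Delta\tilde d_{0}\le \tau\,c_{\Delta}$ with the paper's constant $c_{\Delta}$; however $c_{\Delta}$ is only defined for $D\Subset V_{0}=V_{\rho}(\Gamma_{0})\setminus\{x_{0}\}$, while your $D=\Omega\cap\tilde B_{0}(\tau)$ has $x_{0}$ in its closure, where $\Delta\tilde d_{0}$ blows up like $(N-1)/\tilde d_{0}$. The fix consistent with your quadratic choice is to bound $\Delta(\tilde d_{0}^{2})=2|\nabla\tilde d_{0}|^{2}+2\tilde d_{0}\Delta\tilde d_{0}$ directly: this quantity \emph{is} continuous through $x_{0}$, so a single finite constant on $\overline{D}$ does the job and the rest of your argument goes through unchanged.
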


\begin{proof}
As in Theorem \ref{th:coincidence-convex} it is enough to work with data $%
\widetilde{f}=f+\Delta \Psi $, $\widetilde{\psi }=0$ and $\tilde{g}%
:=g-\partial _{\nu }\Psi $. Let $x_{0}\in \Gamma _{\epsilon ,\delta }$ and $%
\rho _{\max }$ the maximal width of a tubular neighborhood around $\Gamma
_{\epsilon ,\delta }$. Define $R:=\min \{d(x_{0},\Gamma \backslash \Gamma
_{\epsilon ,\delta }),\rho _{\max }\}$, $D:=\Omega \cap \tilde{B}(x_{0},R)$,
$\partial _{1}D:=\partial D\backslash \Gamma $ and $\partial _{2}D:=\partial
D\cap \Gamma \subset \Gamma _{\epsilon ,\delta }$. Define now in $D$ the
function $\overline{u}(x)=c c_{\Delta }^{-1}\tilde{d}_{0}(x)$ for some $c>0$.
In $D$ we have
\begin{equation*}
-\Delta \overline{u}=-\frac{c}{c_{\Delta }}\Delta \tilde{d}_{0}(x)\geqslant
-\epsilon \geq \widetilde{f}.
\end{equation*}
Additionally, on $\partial _{1}D$ we have
\begin{equation*}
\overline{u}\geqslant \frac{c}{c_{\Delta }}R\geqslant M\geqslant \tilde{u},
\end{equation*}%
assuming that
\begin{equation*}
\frac{c}{c_{\Delta }}R\geqslant M.
\end{equation*}

Notice that this condition holds once we take $\rho >0$ (hence also $R$) large
enough (for some given $c>0$). Moreover, since  $\partial _{\nu }\widetilde{u}%
=\tilde{g}$ over $\partial _{2}D-I$, by using property (\ref{normal equals gradient})
we have
\begin{equation*}
\partial _{\nu }\overline{u}=c c_{\Delta }^{-1}\partial _{\nu }\tilde{d}%
_{0}=-c c_{\Delta }^{-1}\geqslant -\delta \geq \tilde{g}\text{ on }\partial
_{2}D),
\end{equation*}
once we take
\begin{equation*}
c c_{\Delta }^{-1}\leq \delta ,
\end{equation*}%
i.e.
\begin{equation*}
c\leq \min (\delta c_{\Delta },\epsilon ).
\end{equation*}
Moreover, since $\overline{u}=0$ in $\partial _{1}D\cup I$ we have $%
\overline{u}_{\alpha }\geqslant \tilde{u}_{\alpha }$ here. This yields $%
\tilde{u}\leqslant \overline{u}$ in $H^{1}(D)$ and the same inequality holds
for the traces, that is:
\begin{equation*}
0\leqslant \tilde{u}(x)\leqslant \frac{c}{c_{\Delta }}\tilde{d}_{0}(x),\text{
over }\Gamma \cap \tilde{B}(x_{0},R)\text{ in }H^{1/2}(\Gamma ).
\end{equation*}
Letting $x\rightarrow x_{0}$ we conclude that $\tilde{u}(x)=0$ a.e. in $%
\Gamma _{\epsilon ,\delta }$.
\end{proof}

\section{Final remarks and related work}
\label{sec:final-remarks}

The above results can be easily extended to the associated heat
equation with Signorini boundary conditions by using arguments similar to
those found in \cite{diaz_behaviour_1986}. Moreover, it is also possible to
extend them to Poisson's equation or to the heat equation
with dynamic boundary conditions as in \cite{diaz_aplicacion_1985} and 
\cite{diaz_special_2005} respectively. Notice that according to the equivalent
formulation of the fractional Laplacian operator (see, e.g.
\cite{caffarelli_extension_2007} and the multiple references given in
\cite{diaz_fractional_2018}), the case of dynamical Signorini boundary conditions 
for the Poisson and related linear equations corresponds to the usual obstacle problem
associated to the fractional Laplace operator.

The Signorini boundary conditions can be also formulated in terms of multivalued
nonlinear maximal monotone graphs (see \cite{Brezis:1972tt}). Some results
analyzing the set in which the solution vanishes on the boundary for other
different nonlinear boundary conditions was the main goal of the paper
\cite{diaz_free_2015}. See also \cite{davila_nonlinear_2005} for
the case of singular nonlinear boundary conditions.

The proof techniques used in the present scalar Signorini problem can also be
applied to the study of the contact region of one of the vectorial components
of the deformation field associated to some problems in linear elasticity (see, e.g., 
\cite{duvaut_inequalities_1976} and \cite{debenitodelgado_sobre_2012}).

A different class of problems to which the techniques of this paper can be
applied are the ones mentioned in the pioneering book
\cite{duvaut_inequalities_1976} concerning temperature control or reverse
osmosis membranes (see also the associated homogenization process in 
\cite{diaz_homogenization_2018}).

\bibliographystyle{apa}
\bibliography{remarks-signorini}
\label{page:e}

\bigskip

\bigskip

\noindent
\textsc{{M}iguel de {B}enito {D}elgado}\\
{U}niversity of {A}ugsburg / appliedAI @ UnternehmerTUM GmbH.\\
\href{mailto:m.debenito.d@gmail.com}{m.debenito.d@gmail.com}

\medskip

\noindent\textsc{{J}esus {I}ldefonso {D}íaz}\\
Instituto de Matemática Interdisciplinar, Universidad Complutense de Madrid.\\
Plaza de Ciencias 3, 28040 Madrid, Spain.\\
\href{mailto:jidiaz@ucm.es}{jidiaz@ucm.es}\\
\href{http://orcid.org/0000-0003-1730-9509}{http://orcid.org/0000-0003-1730-9509}\\

\noindent\textbf{Acknowledgements:} The research of the second author was partially
supported by the projects with ref. MTM2014-57113-P and MTM2017-85449-P of the
DGISPI (Spain).

\end{document}